\documentclass[11pt]{article}


\usepackage{amstext} 
\usepackage{latexsym}
\usepackage{euscript}

\usepackage{amsmath}
\usepackage{amsfonts}
\usepackage{amssymb}

\newcommand{\simi}{\sim\!\!}
\newtheorem{lem}{\bf Lemma}[section]
\newtheorem{teo}{\bf Theorem}[section]
\newtheorem{cor}{\bf Corollary}[section]
\newtheorem{defi}{\bf Definition}[section]
\newtheorem{exa}{\bf Example}[section]
\newtheorem{rem}{\bf Remark}[section]

\def\nd{{\scriptstyle \Gamma}\,}

\newenvironment{proof}{\noindent\bf Proof.\rm}{\hfill $\mbox{\boldmath{$\Box$}}$}

\title{Several characterizations of the 4--valued modal algebras}
\author{Aldo V. Figallo and Paolo Landini}
\date{}

\begin{document}

\maketitle

\begin{abstract}
A. Monteiro, in 1978, defined the algebras he named tetravalent modal algebras,  will be called  {\em $4-$valued modal algebras} in this work. These algebras constitute a generalization of the $3-$valued Lukasiewicz algebras defined by Moisil.

The theory of the $4-$valued modal algebras has been widely developed by I. Loureiro in \cite{IL1, IL2, IL3, IL4, IL5, IL6, IL7} and by A. V. Figallo in \cite{FI3, FI4, AF.PL,AF.AZ}. 

J. Font and M. Rius indicated, in the introduction to the important work \cite{JF.MR2}, a brief but detailed review on the $4-$valued modal algebras.

In this work varied characterizations are presented that show the ``closeness'' this variety of algebras has with other well--known algebras related to the algebraic counterparts of certain logics.

\end{abstract}

\section{Introduction}

In 1940 G. C. Moisil \cite{MOI1} introduced the notion of three--valued $\L$%
ukasiewicz algebra. In 1963, A.Monteiro \cite{AMO1} characterized these
algebras as algebras $\langle A, \wedge, \vee, \sim, \triangledown,1\rangle$ of type 
$(2,2,1,1,0)$ which verify the following identities:

\begin{itemize}
\item[\rm (A1)]  $x\vee 1=1$,

\item[\rm (A2)]  $x\wedge (x\vee y)=x$,

\item[\rm (A3)]  $x\wedge (y\vee z)=(z\wedge x)\vee (y\wedge x)$,

\item[\rm (A4)]  $\sim \simi  x=x$,

\item[\rm (A5)]  $\simi  (x\vee y)=\simi  x\wedge \simi  y$,

\item[\rm (A6)]  $\simi  x\vee \triangledown x=1$,

\item[\rm (A7)]  $x\wedge \simi  x=\simi  x\wedge \triangledown x$,

\item[\rm (A8)]  $\triangledown (x\wedge y)=\triangledown x\wedge \triangledown y$.
\end{itemize}

L.Monteiro \cite{LMO1} proved that A1 follows from A2, $\cdots$, A8, and
that A2, $\cdots$, A8, are independent.

From A2, $\cdots$, A5 it follows that $\langle A, \wedge, \vee, \sim,1\rangle$ is a De
Morgan algebra with last element $1$ and first element $0 = \simi  1$.

In 1969 J. Varlet \cite{VAR1} characterized three--valued {\L}ukasiewicz algebras by
means of other operations. Let $\langle A, \wedge, \vee, *, +,0,1\rangle$ be an algebra
of type $(2,2,1,1,0,0)$ where $\langle A, \wedge, \vee,0,1\rangle$ is a bound distributive lattice with least element $0$, greatest element $1$ and the following properties are satisfied:

\begin{itemize}
\item[\rm (V1)]  $x\wedge x^{*}=0$,

\item[\rm (V2)]  $(x\wedge y)^{*}=x^{*}\wedge y^{*}$,

\item[\rm (V3)]  $0^{*}=1$,

\item[\rm (V4)]  $x\vee x^{+}=1$,

\item[\rm (V5)]  $(x\vee y)^{+}=x^{+}\wedge y^{+}$,

\item[\rm (V6)]  $1^{+}=0$,

\item[\rm (V7)]  if $x^{*}=y^{*}$ and $x^{+}=y^{+}$ then $x=y$.
\end{itemize}

About these algebras he proved that it is posible to define, in the sense of
\cite{AMO1,LMO1} a structure of three-valued {\L}ukasiewicz algebra by taking $\simi  x =(x \vee x^*) \wedge x^+$ and  $\triangledown x = x^{**}$.

Furthermore it holds $x^*=\simi  \triangledown x$ and $x^+=\triangledown \simi  x$. Therefore three--valued {\L}ukasiewicz are double Stone lattices which satisfy the determination
principle V7. Moreover V7 may be replaced by the identity 
$$(x \wedge x^+)\wedge (y \vee y^*)= x \wedge x^+.$$

Later, in 1978, A. Monteiro \cite{AMO1} considered the $4$--valued modal algebras \linebreak $\langle A, \wedge, \vee, \sim, \triangledown,1\rangle$ of type $(2,2,1,1,0)$ which satisfy A2, $\cdots$, A7 as an abstraction of three--valued {\L}ukasiewicz algebras.

In this paper we give several characterizations of the 4--valued modal algebras. In the first one we consider the operations $\wedge, \vee, \neg,\nd,0,1$ where $\neg x = \simi  \triangledown x$, $\nd x = \triangledown \simi  x$ are called strong and weak negation respectively.

\section{A characterization of the 4--valued modal \\
algebras}

\begin{teo}\label{2.1} 
Let $\langle A,\wedge ,\vee ,\neg ,\nd ,0,1\rangle$ be an algebra of type
$(2,2,1,1,0,0)$ where  $\langle A,\wedge ,\vee ,0,1\rangle$ is a bounded distributive lattice with least element $0$, \linebreak greatest element $1$ and the operators $\triangledown ,\sim $ are defined on $A$ by means of the formulas:

\begin{itemize}
\item[\rm (D1)]  $\triangledown x=\neg \neg x$,

\item[\rm(D2)]  $\simi  x=(x\vee \neg x)\wedge \nd x$.
\end{itemize}
\noindent Then $\langle A,\wedge ,\vee ,\sim ,\triangledown ,1\rangle$ is a $4$--valued modal algebra if and only if it satisfies the following properties:

\begin{itemize}
\item[\rm (B1)]  $x\wedge \neg x=0$,

\item[\rm (B2)]  $x\vee \nd x=1$,

\item[\rm (B3)]  $\neg x\wedge \nd \neg x=0$,

\item[\rm (B4)]  $\nd x\vee \neg \nd x=1$,

\item[\rm (B5)]  $\nd (x\wedge y)=\nd x\vee \nd y$,

\item[\rm (B6)]  $\neg (x\vee y)=\neg x\wedge \neg y$,

\item[\rm (B7)]  $\neg (x\wedge \neg y)=\neg x\vee \neg \neg y$,

\item[\rm (B8)]  $\nd (x\vee \nd y)=\nd x\wedge \nd \nd y$,

\item[\rm (B9)]  $(x\vee y)\wedge \nd (x\vee y)\leq x\vee \neg x$,

\item[\rm (B10)]  $x\wedge \nd x\wedge y\wedge \nd y\leq \nd (x\vee y)$,
\end{itemize}

\noindent where $a\leq b$ if and only if $a\wedge b=a$ or $a\vee b=b$. Moreover, $\neg x$ and $\nd x$ denote $\simi  \triangledown x$, and $\triangledown \simi  x$ respectively.
\end{teo}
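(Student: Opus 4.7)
I would prove the two implications separately and extract, in each direction, the small set of working identities that carries most of the computation.

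\medskip

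Direct implication. Assume $\langle A,\wedge,\vee,\simi,\triangledown,1\rangle$ satisfies A2--A7. I would first assemble the standard arithmetic of such algebras: $\simi$ is an involutive De Morgan negation, $\triangledown$ is increasing ($x\leq\triangledown x$) and idempotent, preserves finite joins, and satisfies $\simi\triangledown x\leq x\leq\triangledown x$ together with $\triangledown\simi x\wedge x=\triangledown\simi x\wedge\simi x$. Writing $\neg x=\simi\triangledown x$ and $\nd x=\triangledown\simi x$, De Morgan and the behaviour of $\triangledown$ on joins turn $\neg$ into an antitone operator that sends $\vee$ to $\wedge$ and gives $\nd$ the dual behaviour on meets. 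Then B1 and B2 read $x\wedge\simi\triangledown x=0$ and $x\vee\triangledown\simi x=1$, both immediate from $x\leq\triangledown x$ and A7; B3, B4 are the same identities with $x$ replaced by $\neg x$; B5, B6 combine De Morgan with the preservation of joins by $\triangledown$; B7, B8 rest on the collapse $\triangledown\triangledown=\triangledown$ together with De Morgan; and B9, B10 are A7 applied respectively to $x\vee y$ and to $x,y$ separately, combined with the monotonicity bounds for $\triangledown$ acting on $\simi(x\vee y)=\simi x\wedge\simi y$.

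\medskip

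Converse. Assume B1--B10 and set $\triangledown x=\neg\neg x$, $\simi x=(x\vee\neg x)\wedge\nd x$. Axioms A2, A3 are part of the lattice hypothesis. As preliminaries I would derive: $\neg$ is antitone from B6 and $\nd$ is antitone from B5; $\neg 0=1$, $\neg 1=0$, $\nd 0=1$, $\nd 1=0$ by specialising B1, B2; and from B7, B8 the collapses $\neg\neg\neg x=\neg x$ and $\nd\nd\nd x=\nd x$, so $\triangledown$ is idempotent and preserves joins by B6. Armed with these, A6 ($\simi x\vee\triangledown x=1$) reduces after distribution to B4 applied to $\neg x$ jointly with $x\vee\neg x\geq x$; A5 ($\simi(x\vee y)=\simi x\wedge\simi y$) follows by expanding via D2 and applying B5, B6 to push $\neg$ and $\nd$ through the join; and A7 ($x\wedge\simi x=\simi x\wedge\triangledown x$) is precisely the two inequalities packaged by B9 and B10, once $\nd(x\vee y)$ and $x\wedge\nd x$ are rewritten via D1, D2.

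\medskip

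The main obstacle, which I would tackle last, is A4, the involutivity $\simi\simi x=x$. Expanding with D2,
\[
\simi\simi x=\bigl(\simi x\vee\neg\simi x\bigr)\wedge\nd\simi x,\qquad \simi x=(x\vee\neg x)\wedge\nd x,
\]
so one must compute $\neg\simi x$ and $\nd\simi x$. I would push $\neg$ and $\nd$ through the formula for $\simi x$ using B5--B8 and the preliminary antitonicity facts, rewriting both expressions in terms of $\neg x$, $\nd x$ and $\neg\neg x$. A distributive calculation then exhibits $\simi\simi x$ as a lattice polynomial that, thanks to the determination principle hidden in B9 and B10 (in the spirit of Varlet's V7), collapses to $x$. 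This is the only step where B9 and B10 are used beyond providing A7, and it is the delicate one; the remaining axioms fall out by routine distributive manipulation once the preliminary properties of $\neg$ and $\nd$ are in place.
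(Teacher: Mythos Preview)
Your overall structure is right, but you have misjudged which axiom is the delicate one and where B9, B10 are actually needed.

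The genuine gap is your treatment of A5. You write that ``A5 follows by expanding via D2 and applying B5, B6 to push $\neg$ and $\nd$ through the join.'' But there is no axiom that lets you push $\nd$ through a join: B5 says $\nd(x\wedge y)=\nd x\vee\nd y$, and B8 only handles the special form $\nd(x\vee\nd y)$. The identity $\nd(x\vee y)=\nd x\wedge\nd y$ would translate to $\triangledown(u\wedge v)=\triangledown u\wedge\triangledown v$, which is axiom A8 of three--valued {\L}ukasiewicz algebras and is \emph{not} assumed for $4$--valued modal algebras (and fails in the four--element example of the paper). So A5 cannot be obtained by the routine distribution you describe. In the paper the proof of A5 is the hardest part: one first shows that $\simi$ is antitone (this uses B9), then proves the technical inequalities $\neg x\wedge\nd y\leq\nd(x\vee y)$, $x\wedge\nd x\wedge\simi y\leq\nd(x\vee y)$ (this uses B10), and $\neg x\wedge\simi y\leq(x\vee y)\vee\neg(x\vee y)$, and only then assembles $\simi x\wedge\simi y\leq\simi(x\vee y)$.

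Conversely, you overestimate A4 and A7. In the paper both are derived from B1--B8 alone (via the auxiliary identities $x\leq\neg\neg x$, $\nd\nd x\leq x$, $\neg\neg\neg x=\neg x$, $\nd\nd\nd x=\nd x$, $\nd\neg x=\neg\neg x$, $\nd\neg\neg x=\neg x$, and the two key computations $\nd\simi x=\neg\neg x$ and $\neg\simi x=\nd\nd x$), without invoking B9 or B10 at all. So your claim that ``B9 and B10 are used beyond providing A7'' only at the A4 step inverts the actual dependencies: B9 and B10 are needed precisely for A5, and nowhere else.
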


The verification of the neccesary condition does not offer any special
difficulty; therefore we omit the proof. For the sufficient condition we
need the following lemmas and corollaries:

\

\begin{lem}
If $\langle A,\wedge ,\vee ,\neg ,\nd ,0,1\rangle$ is an algebra of type 
$(2,2,1,1,0,0)$ which verifies the properties {\rm B1, $\cdots$, B10} of theorem \ref{2.1} then it holds:

\begin{itemize}
\item[\rm (B11)]  $\nd 0=1$,

\item[\rm (B12)]  $\neg 1=0$,

\item[\rm (B13)]  $\neg x\leq \nd x$,

\item[\rm (B14)]  $\neg 0=1$,

\item[\rm (B15)]  $\nd 1=0$,

\item[\rm (B16)]  $\nd x\wedge \nd \nd x=0$,

\item[\rm (B17)]  $\neg x\vee \neg \neg x=1$,

\item[\rm (B18)]  $\neg \neg x=\nd \neg x$,

\item[\rm (B19)]  $\nd \nd x=\neg \nd x$,

\item[\rm (B20)]  $\neg x\wedge \nd \nd x=0$,

\item[\rm (B21)]  $x\leq \neg \neg x$,

\item[\rm (B22)]  $\nd \nd x\leq x$,

\item[\rm (B23)]  $\neg \neg \neg x=\neg x$,

\item[\rm (B24)]  $\nd \nd \nd x=\nd x$,

\item[\rm (B25)]  $\neg \nd x\leq x$,

\item[\rm (B26)]  $\nd \nd \neg x=\neg x$,

\item[\rm (B27)]  $\nd \nd \nd \neg x=\neg \neg x$,

\item[\rm (B28)]  $\nd ((x\vee \neg x)\wedge \nd x)=\neg \neg x$,

\item[\rm (B29)]  $\neg \neg \nd x=\nd x$,

\item[\rm (B30)]  $\neg \neg \neg \nd x=\nd \nd x$,

\item[\rm (B31)]  $\neg ((x\wedge \nd x)\vee \neg x)=\nd \nd x$,

\item[\rm (B32)]  $\nd \neg \neg x=\neg x$.
\end{itemize}
\end{lem}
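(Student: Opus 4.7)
The plan is to derive B11--B32 in an order that makes each usable in the proofs that follow; most identities come in dual pairs since $\neg$ and $\nd$ satisfy dual axioms, so I only sketch one representative of each pair. The easy constants and the first containment are immediate: B11 follows from B2 at $x=0$, and B12 from B1 at $x=1$. For B13 the plan is to apply B5 with $y=\neg x$ together with B1 and B11 to derive $\nd x \vee \nd\neg x = 1$; then B3 combined with distributivity yields $\neg x = \neg x \wedge (\nd x \vee \nd\neg x) = \neg x \wedge \nd x$, hence $\neg x \leq \nd x$.

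The crux of the argument is B14; B15 will then follow at once. Three ingredients must be combined. First, applying B9 with $y=\nd x$ and using B2 gives $\nd 1 \leq x \vee \neg x$ for every $x$, and specialising $x=0$ reads $\nd 1 \leq \neg 0$. Second, B7 with $x$ replaced by $\nd z$ and $y$ by $z$, together with the identity $\nd z \wedge \neg z = \neg z$ coming from B13, yields $\neg \nd z \leq \neg\neg z$; at $z=1$ this becomes $\neg \nd 1 \leq \neg 0$ via B12. Third, B4 at $x=1$ says $\nd 1 \vee \neg \nd 1 = 1$. Combining these three facts forces $1 \leq \neg 0$, hence B14; then B3 at $x=0$ gives B15 immediately.

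With B14 and B15 in hand, the remainder is systematic. Applying B7 with $y=x$ to B1 produces $\neg 0 = \neg x \vee \neg\neg x$, so B14 yields B17; dually B8 with $y=x$ and B2 give B16. Since $\neg x$ now admits two complements in the distributive lattice, namely $\nd\neg x$ (by B2 and B3) and $\neg\neg x$ (by B17 and B1 applied to $\neg x$), uniqueness of complements forces B18, and dually B19. The containments B21 and B22 follow from the same distributivity trick used for B13: $x = x \wedge (\neg x \vee \neg\neg x) = x \wedge \neg\neg x$ using B1 and B17, and dually for B22 via B16 and B2. Antitonicity of $\neg$, a direct consequence of B6, combined with B21 produces B23, and dually B24. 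Finally B20, B25--B32 are short manipulations obtained by feeding B18, B19, B23, B24 into the functional equations B5--B8; for instance B26 combines B19 applied to $\neg x$ with B18 and B23, and B28 comes from expanding $\nd((x \vee \neg x) \wedge \nd x)$ by repeated use of B5.

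The main obstacle is clearly B14: the functional equations B5--B8 do not determine $\neg 0$ on their own, and one must marshal B9 (to bound $\nd 1$ above by $\neg 0$), B7 together with B13 (to bound $\neg \nd 1$ above by $\neg 0$), and B4 (to join them into $1$) simultaneously in order to force $\neg 0 = 1$. Once that single identity is established, the bulk of the lemma reduces to uniqueness of complements in the distributive lattice and straightforward substitutions in B5--B8.
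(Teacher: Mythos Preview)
Your argument is correct and in fact considerably more explicit than the paper's, which only writes out B18, B22, B28 and B31 and leaves the remaining eighteen identities (including B14) to the reader. Your derivation of each checked item goes through as stated, and the organisation into dual pairs plus uniqueness-of-complements for B18/B19 matches what the paper's sketch implicitly relies on.

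One small economy in your treatment of B14: your second ingredient ($\neg\nd 1\le\neg 0$) does not actually need B7 together with B13. Antitonicity of $\neg$ follows directly from B6, so from $0\le\nd 1$ you get $\neg\nd 1\le\neg 0$ immediately; combined with $\nd 1\le\neg 0$ (from B9 as you note) and B4 at $x=1$, this already forces $\neg 0=1$. Your route via B7 and B13 is also valid, just slightly longer. Either way, your identification of B14 as the one nontrivial step---requiring B9 and B4 simultaneously, whereas everything else flows from B5--B8 once $\neg 0=1$ and $\nd 1=0$ are known---is an accurate structural observation that the paper does not make explicit.
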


\

\begin{proof} We only check B18, B22, B28 and B31.

\begin{itemize}
\item[\rm (B18)]  Then $\nd \neg x\leq \neg \neg x$ and by B13 $\neg \neg
x\leq \nd \neg x$.

\item[\rm (B22)]  $x=x\vee \nd 1$, \hfill [B15]

\hspace{2.5mm} $=x\vee \nd (x\vee \nd x)$, \hfill [B2]

\hspace{2.5mm} $=x\vee (\nd x\wedge \nd \nd x)$, \hfill [B8]

\hspace{2.5mm} $=x\vee \nd \nd x.$ \hfill [B2]

\item[\rm (B28)]  $\nd ((x\vee \neg x)\wedge \nd x)=\nd (x\vee \neg
x)\vee \nd \nd x)$, \hfill [B5]

\hspace{30mm} $=\nd (x\vee \nd \nd \neg x)\vee \nd \nd x$, \hfill [B26]

\hspace{30mm} $=(\nd x\wedge \neg \neg x)\vee \nd \nd x$, \hfill[B8,B27]

\hspace{30mm} $=\neg \neg x$. \hfill [B2,B21,B22]

\item[\rm (B31)]  $\neg ((x\wedge \nd x)\vee \neg x)=\neg (x\wedge \nd
x)\wedge \neg \neg x)$, \hfill [B6]

\hspace{31mm} $=\neg (x\wedge \neg \neg \nd x)\wedge \neg \neg x$, \hfill [B29]

\hspace{31mm} $=(\neg x\vee \nd \nd x)\wedge \neg \neg x$, \hfill [B7,B30]

\hspace{31mm} $=\nd \nd x$. \hfill [B1,B21,B22]
\end{itemize}
\end{proof}

\begin{cor} {\rm (Axiom A4)} $\sim \simi  x=x.$
\end{cor}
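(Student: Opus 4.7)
The plan is to compute $\simi \simi x$ directly from D2 by first simplifying $\simi x$, then applying the preceding lemma to obtain $\neg \simi x$ and $\nd \simi x$, and finally assembling the pieces.

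First, B13 gives $\neg x\leq \nd x$, so $\neg x\wedge \nd x=\neg x$, and distributivity of the underlying lattice rewrites D2 as
\[
\simi x=(x\vee \neg x)\wedge \nd x=(x\wedge \nd x)\vee \neg x.
\]
The second form is precisely the argument of $\neg$ in B31, so B31 yields $\neg \simi x=\nd \nd x$, while the first form is the argument of $\nd$ in B28, so B28 yields $\nd \simi x=\neg \neg x$.

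Applying D2 now to $\simi x$ and substituting,
\[
\simi \simi x=(\simi x\vee \neg \simi x)\wedge \nd \simi x=(\simi x\vee \nd \nd x)\wedge \neg \neg x.
\]
Since B21 and B22 give $\nd \nd x\leq x\leq \neg \neg x$, distributivity rewrites this as $(\simi x\wedge \neg \neg x)\vee \nd \nd x$. Substituting $\simi x=(x\wedge \nd x)\vee \neg x$, and using B1 applied to $\neg x$ to get $\neg x\wedge \neg \neg x=0$ together with B21 in the form $x\wedge \neg \neg x=x$, one finds $\simi x\wedge \neg \neg x=x\wedge \nd x$, so $\simi \simi x=(x\wedge \nd x)\vee \nd \nd x$.

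Finally, distributivity and $\nd \nd x\leq x$ convert this into $x\wedge(\nd x\vee \nd \nd x)$; by B19 one has $\nd \nd x=\neg \nd x$, and B4 then gives $\nd x\vee \neg \nd x=1$, so $\simi \simi x=x\wedge 1=x$. The whole argument is mechanical; the only step requiring a small insight is the opening rewriting of D2 in the alternative form $(x\wedge \nd x)\vee \neg x$, which is what makes B28 and B31 applicable directly.
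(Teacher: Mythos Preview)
Your proof is correct and essentially follows the paper's own argument: the paper also first records the alternative form $\simi x=(x\wedge\nd x)\vee\neg x$ (labelled D3), invokes B28 and B31 to obtain $\nd\simi x=\neg\neg x$ and $\neg\simi x=\nd\nd x$, and then simplifies via B1, B21, B22. The only cosmetic differences are that the paper applies D3 rather than D2 to the outer $\simi$ (arriving at $(\simi x\wedge\neg\neg x)\vee\nd\nd x$ directly instead of via your intermediate distributive step) and that it finishes with B2 in place of your B19+B4.
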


\begin{proof} First, we observe that from B13 and D2 we obtain

\begin{itemize}
\item[\rm (D3)]  $\simi  x=(x\wedge \nd x)\vee \neg x$.
\end{itemize}

Then

$\sim \simi  x=(((x\wedge \nd x)\vee \neg x)\wedge \nd ((x\wedge
\nd x)\vee \neg x))\vee \neg ((x \wedge \nd x)\vee \neg x)$, \hfill [D3]

\hspace{8.5mm} $=(((x\wedge \nd x)\vee \neg x)\vee \neg \neg x)\vee \nd
\nd x$, \hfill [B28,B31,D3,D2]

\hspace{8.5mm} $=((x\wedge \nd x\wedge \neg \neg x)\vee (\neg x\wedge \neg
\neg x))\vee \nd \nd x$,

\hspace{8.5mm} $=(x\wedge \nd x\wedge \neg \neg x)\vee \nd \nd x$, \hfill [B1]

\hspace{8.5mm} $=x$. \hfill [B22,B2,B21,B22]

\end{proof}

\begin{cor}{\rm (Axiom A6)} $\simi  x\vee \triangledown x=1$.
\end{cor}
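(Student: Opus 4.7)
The plan is to reduce $\simi x \vee \triangledown x$ to a disjunction that contains the complementary pair $\neg x \vee \neg\neg x$ already known (from B17) to be $1$, and then use the fact that any join with $1$ equals $1$.

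First I would expand $\simi x$ by means of the alternative formula
$$\simi x = (x \wedge \nd x)\vee \neg x,$$
which is exactly identity D3 derived at the start of the proof of the previous corollary from B13 and D2. Simultaneously I would replace $\triangledown x$ by $\neg\neg x$ using D1. This turns the left-hand side into
$$\simi x \vee \triangledown x \;=\; \bigl((x\wedge \nd x)\vee \neg x\bigr)\vee \neg\neg x.$$

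Next, using the associativity and commutativity of $\vee$ in the bounded distributive lattice $\langle A,\wedge,\vee,0,1\rangle$, I would regroup the right-hand side as
$$(x\wedge \nd x)\vee(\neg x \vee \neg\neg x).$$
By B17 the inner disjunction equals $1$, so the whole expression collapses to $(x\wedge \nd x)\vee 1$, which is $1$ by the bounded-lattice law $a\vee 1=1$ (axiom A1, already available).

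There is essentially no obstacle here; the only thing to keep in mind is to justify using D3 (it was just introduced during the proof of the previous corollary, purely from B13 and D2 in the lattice setting) and to cite B17 from the preceding lemma. The entire argument is a one-line rewriting.
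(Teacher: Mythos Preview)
Your argument is correct and is essentially the paper's own proof: expand $\simi x$ via D3 and $\triangledown x$ via D1 to get $(x\wedge \nd x)\vee \neg x\vee \neg\neg x$, then apply B17 to collapse $\neg x\vee \neg\neg x$ to $1$. The paper merely inserts one extra intermediate step, first writing $\simi x$ in the D2 form before passing to D3.
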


\begin{proof}

$\simi  x\vee \triangledown x=((x\vee \neg x)\wedge \nd x)\vee \neg \neg x$, \hfill [D2,D1]

\hspace{15.5mm} $=(x\wedge \nd x) \vee \neg x \vee \neg \neg x$, \hfill [D3]

\hspace{15.5mm} $=(x \vee \nd x) \vee 1=1$. \hfill [B17]

\end{proof}

\begin{cor}{\rm (Axiom A7)} $x\wedge \simi  x=\simi  x\wedge \triangledown x$.
\end{cor}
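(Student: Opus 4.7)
The plan is to reduce both sides of $x \wedge \simi x = \simi x \wedge \triangledown x$ to the common expression $x \wedge \nd x$. The most convenient form of $\simi x$ here is the dual form D3, namely $\simi x = (x \wedge \nd x) \vee \neg x$, which was already established in the previous corollary's proof.

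For the left-hand side, I would substitute D3 and distribute $x \wedge$ across the join, obtaining
\[
x \wedge \simi x = (x \wedge x \wedge \nd x) \vee (x \wedge \neg x).
\]
The second disjunct collapses to $0$ by B1, leaving $x \wedge \nd x$.

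For the right-hand side, I would substitute D3 for $\simi x$ and D1 for $\triangledown x$, then distribute $\wedge\, \neg\neg x$:
\[
\simi x \wedge \triangledown x = (x \wedge \nd x \wedge \neg\neg x) \vee (\neg x \wedge \neg\neg x).
\]
The second disjunct vanishes by B1 applied with $\neg x$ in place of $x$ (that is, $\neg x \wedge \neg(\neg x) = 0$). The first disjunct simplifies using B21 ($x \leq \neg\neg x$), which gives $x \wedge \neg\neg x = x$, hence the right-hand side equals $x \wedge \nd x$.

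There is no real obstacle here; the only subtlety is noticing that $\neg x \wedge \neg\neg x = 0$ is just B1 instantiated at $\neg x$, rather than requiring a new fact from the lemma. Everything else is straightforward distributivity in the bounded distributive lattice together with B1 and B21.
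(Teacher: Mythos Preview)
Your proof is correct and follows essentially the same route as the paper: both reduce $\simi x \wedge \triangledown x$ to $x \wedge \nd x$ via D3, D1, distributivity, B1 (instantiated at $\neg x$), and B21, and then identify this with $x \wedge \simi x$ using D3 and B1. The only cosmetic difference is that the paper writes the argument as a single chain $\simi x \wedge \triangledown x \to x \wedge \nd x \to x \wedge \simi x$, whereas you reduce each side separately to the common value $x \wedge \nd x$.
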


\begin{proof}

$\simi  x\wedge \triangledown x=((x\wedge \nd x)\vee \neg x)\wedge \neg \neg x$, \hfill [D3,D1]

\hspace{17mm}$=(x\wedge \nd x \wedge \neg \neg x) \vee (\neg x\wedge \neg \neg x)$,

\hspace{17mm}$=x\wedge \nd x=(x\wedge \nd x)\vee 0$, \hfill[B21,B1]

\hspace{17mm}$=(x\wedge \nd x) \vee (x \wedge \neg x)$, \hfill[B1]

\hspace{17mm}$=((x\wedge \nd x) \vee x) \wedge ((x \wedge \nd x) \vee \neg x)$,

\hspace{17mm}$=x \wedge \simi x$.\hfill[D3]

\end{proof}

\begin{lem}
The following properties hold:

\begin{itemize}
\item[\rm (B33)]  if $x\leq y$ then $\neg y\leq \neg x$ and $\nd y\leq \nd x
$,

\item[\rm (B34)]  $\simi  \nd x=\nd \nd x$,

\item[\rm (B35)]  $\simi  (\neg x\wedge \nd y)=\neg \neg x\vee \nd \nd y$,

\item[\rm (B36)]  $\nd \nd (y\vee \neg \neg x)=\nd \nd y\vee \neg
\neg x$,

\item[\rm (B37)]  $\nd \nd (x\vee y)=\neg \neg x\vee \nd \nd y$,

\item[\rm (B38)]  if $x\leq y$ then $\simi  y\leq \simi  x$,

\item[\rm (B39)]  $\neg x\wedge \nd y\leq \nd \left( x\vee y\right) ,$

\item[\rm (B40)]  $\neg x\wedge \simi  y\leq \left( x\vee y\right) \vee \neg \left(
x\vee y\right) ,$

\item[\rm (B41)]  $x\wedge \nd x\wedge \simi  y\leq \nd \left( x\vee y\right)
,$

\item[\rm (B42)]  $\neg x\wedge \simi  y\leq \nd \left( x\vee y\right) .$
\end{itemize}
\end{lem}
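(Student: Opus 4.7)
The overall strategy is to bootstrap from the axioms B1--B10, the identities B11--B32 already available from the previous lemma, and the formulas D2, D3 for $\simi$. I would prove B33--B36 first (the basic monotonicity and distribution facts), then tackle B37 as the main technical step, and finally dispatch B38--B42 as quick consequences of the earlier work and of B39.

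The order-theoretic and distribution items are essentially mechanical. For B33, write $x \leq y$ as $x \vee y = y$ (respectively $x \wedge y = x$) and apply B6 (respectively B5) to read off $\neg y \leq \neg x$ and $\nd y \leq \nd x$. B34 is obtained by plugging $\nd x$ into D3: the meet term vanishes by B16 and B19 identifies the remainder as $\nd \nd x$. For B35, set $z = \neg x \wedge \nd y$; B5 and B18 give $\nd z = \neg \neg x \vee \nd \nd y$, while B1 (on $\neg x \wedge \neg \neg x$) and B16 (on $\nd y \wedge \nd \nd y$) show $z \wedge \nd z = 0$, so D3 reduces $\simi z$ to $\neg z$, which matches the claim after writing $\nd y = \neg \nd \nd y$ (via B19 and B24) and applying B7 together with B23. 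For B36, rewrite $\neg \neg x = \nd \neg x$ by B18, apply B8 to get $\nd(y \vee \nd \neg x) = \nd y \wedge \neg x$ (using B26), then apply B5 and B18 a second time for the outer $\nd$.

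B37 is the hard part. One inclusion is clean: $x \leq \neg \neg x$ by B21, and the monotonicity of $\nd \nd$ together with B36 yields $\nd \nd(x \vee y) \leq \nd \nd(\neg \neg x \vee y) = \nd \nd y \vee \neg \neg x$. The reverse inclusion $\neg \neg x \vee \nd \nd y \leq \nd \nd(x \vee y)$ is where I anticipate genuine difficulty: the piece $\nd \nd y \leq \nd \nd(x \vee y)$ is immediate from $y \leq x \vee y$ and monotonicity, but $\neg \neg x \leq \nd \nd(x \vee y)$ is not obvious. A naive attempt via B19 (rewriting $\nd \nd(x \vee y)$ as $\neg \nd(x \vee y)$) reduces it to $\nd(x \vee y) \leq \neg x$, which is not on offer. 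I would look for a symmetric use of B26, B30, or a second application of B36 in a clever guise, and this is the step I expect to occupy most of the work.

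Once B37 is in hand, B38--B42 fall quickly. For B38, use B9 with $x \leq y$ (so $x \vee y = y$) to obtain $y \wedge \nd y \leq x \vee \neg x$; combined with $y \wedge \nd y \leq \nd y \leq \nd x$ from B33, this gives $y \wedge \nd y \leq (x \vee \neg x) \wedge \nd x = \simi x$ by D2, and since $\neg y \leq \neg x \leq \simi x$ (from B33 and D3), D3 yields $\simi y \leq \simi x$. For B39, apply B21 to get $x \vee y \leq \neg \neg x \vee y = \nd \neg x \vee y$ (via B18); then by the order-reversing B33 together with B8 and B26, $\nd(x \vee y) \geq \nd(\nd \neg x \vee y) = \nd y \wedge \neg x$. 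Finally, for B40--B42 I expand $\simi y = (y \wedge \nd y) \vee \neg y$ by D3, distribute through $\neg x \wedge (\cdot)$ or $x \wedge \nd x \wedge (\cdot)$, and handle each piece via B6 (to identify $\neg x \wedge \neg y = \neg(x \vee y)$), B10 (for the $x \wedge \nd x \wedge y \wedge \nd y$ summand in B41), B39 (for the mixed $\neg \cdot \wedge \nd \cdot$ summands), and B13 (to promote $\neg(x \vee y) \leq \nd(x \vee y)$ in B42).
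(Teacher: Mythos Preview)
Your instinct about B37 is right, and sharper than you realized. The reverse inequality $\neg\neg x\vee\nd\nd y\leq\nd\nd(x\vee y)$ is not merely hard: it is \emph{false} under B1--B10. In the paper's own four-element modal algebra $\{0,a,b,1\}$ (the diamond with $\simi a=a$, $\simi b=b$, $\triangledown a=\triangledown b=1$) one computes $\neg a=0$ and $\nd a=1$, so that for $x=a$, $y=0$ we get $\nd\nd(a\vee 0)=\nd 1=0$ while $\neg\neg a\vee\nd\nd 0=1\vee 0=1$. Thus B37 should be read as the one-sided inequality $\nd\nd(x\vee y)\leq\neg\neg x\vee\nd\nd y$, which is precisely the direction you already obtained from B21, B33, and B36. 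That inequality is all the paper actually invokes (in its proof of B39, via B34, B35, and then B38 together with the already-established involutivity $\sim\simi x=x$). Better still, your own direct route to B39---through $x\vee y\leq\nd\neg x\vee y$ followed by B33, B8, B26---bypasses B37 entirely, so the difficulty you flagged never blocks your argument.

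The remaining items B33--B36 and B38--B42 in your proposal are correct and follow the paper's overall line; your treatment of B38 (bounding $y\wedge\nd y$ and $\neg y$ separately by $\simi x$ via B9 and B33) is in fact cleaner than the paper's lengthy computation of $\simi y\vee\simi x$.
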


\begin{proof}

We check only B34, B35, B36, B38, B39, B40 and B41.

\begin{itemize}
\item[\rm (B34)]  $\simi  \nd x=\nd \nd x \wedge (\nd x \vee \neg \nd x)
$, \hfill [D2]

\hspace{9.5mm}$=\nd \nd x \wedge (\nd x\vee \nd \nd x)$, \hfill[B19]

\hspace{9.5mm}$=\nd \nd x$, \hfill[B2]

\item[\rm (B35)]  (1) $\simi   \left( \neg x\wedge \nd y\right) =\nd
\left( \neg x\wedge \nd y\right) \wedge \left( \left( \neg x\wedge \nd
y\right) \vee \neg \left( \neg x\wedge \nd y\right) \right). $\hfill[D2]

On the other hand (2) $\nd \left( \neg x\wedge \nd y\right) =\nd \neg x\lor \nd \nd y$, \hfill[B5]

\hspace{28.5mm}$=\neg \neg x\lor \nd \nd y$, \hfill [B18]

and

(3) $\neg \left( \neg x\wedge \nd y\right) =\neg \nd y\vee \neg \neg x$, \hfill[B7]

\hspace{28mm}$=\neg \neg x\lor \nd \nd y$, \hfill[B19]

Then B35 follows from (1), (2) and (3).

\item[\rm (B36)]  $\nd \nd y\vee \neg \neg x=\nd \nd y\vee \nd \nd \neg \neg x$, \hfill[B26]

\hspace{20.5mm}$=\nd \left( \nd y\land \nd \neg \neg x\right) $, \hfill[B5]

\hspace{20.5mm}$=\nd \left( \nd y\land \neg x\right) $, \hfill[B32]

\hspace{20.5mm}$=\nd \left( \nd y\land \nd \nd \neg y\right) $, \hfill[B26]

\hspace{20.5mm}$=\nd \nd \left( y\vee \nd \neg x\right) $, \hfill[B8]

\hspace{20.5mm}$=\nd \nd \left( y\vee \neg \neg x\right) $, \hfill[B18]

\item[\rm (B38)]  Let $x,y$ be such that

\begin{itemize}
\item[\rm (1)]  $x\leq y$.

Then

\item[\rm (2)]  $\simi  y\vee \simi  x=\left( y\wedge \nd y\right) \vee \neg
y\vee \neg x\vee \left( x\wedge \nd x\right) $, \hfill[D3]

\hspace{16mm}$=\left( y\wedge \nd y\right) \vee \neg x\vee \left(x\wedge \nd x\right) $, \hfill[(1),B34]

\hspace{16mm}$=\neg x\vee \left( \left( y\vee x\right) \wedge \left(
y\vee \nd x\right) \wedge \left( \nd y\vee x\right) \wedge \left(
\nd y\vee \nd x\right) \right) $,

\hspace{16mm}$=\neg x\vee \left( y\wedge \left( y\vee \nd x\right)
\wedge \left( \nd y\vee x\right) \wedge \nd x\right)$. \hfill((1),B34)

Furthermore

\item[\rm (3)]  $1=x\vee \nd x$, \hfill[B2]

\hspace{4mm}$\leq y\vee \nd x$, \hfill[(1)]

Then

\item[\rm (4)]  $\simi  y\vee \simi  x=\neg x\vee \left( y\wedge \left( \nd y\vee
x\right) \wedge \nd x\right) $, \hfill[(2),(3)]

\hspace{16mm}$  =\neg x\vee \left( \nd x\wedge \left( \left( y\wedge
\nd y\right) \vee \left( y\lor x\right) \right) \right) $,

\hspace{16mm}$  =\nd x\wedge \left( \neg x\vee \left( y\wedge y\right)
\vee x\right) $, \hfill[(1),B13]

\item[\rm (5)]  $y\wedge \nd y\leq x\vee \neg  x$. \hfill[(1),B9]
\end{itemize}

\begin{itemize}
\item[] Then

$\simi  y\vee \simi  x=\nd x\wedge \left( \neg x\vee x\right) $, \hfill[(4),(5)]

\hspace{16mm}$=\simi  x$, \hfill[D2]

\item[\rm (B39)]  From B34 and B35 we have

$\simi  \nd \left( x\vee y\right) =\nd \nd \left( x\vee y\right) $,

$\simi  \left( \neg x\land y\right) =\neg \neg x\vee \nd \nd y$,

and by B37 it results

(1) $\simi  \nd \left( x\vee y\right) \leq \simi  \left( \neg x\vee \nd
y\right) $

From (1), B38 and corollary 2.3. $\neg x \wedge \nd y \leq \nd(x \wedge y)$. 

\item[\rm (B40)]  $\neg x\wedge \simi  y\wedge \left( \left( x\vee y\right) \vee
\neg \left( x\vee y\right) \right) =\simi  y\wedge \left( \left( \neg x\wedge \left( x\vee y\right) \right) \vee \left(\neg x\wedge \neg \left( x\vee y\right) \right) \right) $,

\hspace{55mm}$=\simi  y\wedge \left( \left( \neg x\wedge
y\right) \vee \left( \neg x\wedge \neg y\right) \right) $, \hfill[B1,B33]

\hspace{55mm}$=\simi  y\wedge \neg x\wedge \left( y\vee \neg y\right) $,

\hspace{55mm}$=\neg x \wedge \simi y$. \hfill [D3]

\item[\rm (B41)] (1) $x\wedge \nd x\wedge \simi  y=x\wedge \nd x\wedge \nd y\wedge \left( y\vee \neg y\right) $,\hfill[D3]
	
\hspace{29mm}$=\left( x\wedge \nd x\wedge \nd y\wedge y\right) \vee \left( x\wedge \nd x\wedge \nd y\wedge \neg y\right)$.

On the other hand 

(2) $x\wedge \nd x\wedge y\wedge \nd y\leq \nd \left( x\vee y\right)$,\hfill[B10]
	
(3)\hspace{15mm}$\nd x\wedge \neg y\leq \nd \left( x\vee y\right)$. \hfill[B39]

Then

$x\wedge \nd x\wedge \simi  y\leq \nd \left( x\vee y\right) \vee \left( x\wedge \nd \left( x\vee y\right) \right) =\nd \left( x\vee y\right) $. \hfill[(1)(2)(3)]
\end{itemize}
\end{itemize}
\end{proof}

\begin{cor} \rm (Axiom A5)
$ \simi  (x \vee y) = \simi  x \wedge \simi  y$
\end{cor}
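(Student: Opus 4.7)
The plan is to establish both inclusions separately. The direction $\simi(x \vee y) \leq \simi x \wedge \simi y$ is immediate from the antitonicity B38: since $x \leq x \vee y$ and $y \leq x \vee y$, we obtain $\simi(x \vee y) \leq \simi x$ and $\simi(x \vee y) \leq \simi y$, and taking the meet finishes this direction.

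For the reverse inclusion $\simi x \wedge \simi y \leq \simi(x \vee y)$, I would use D3 to rewrite $\simi x = (x \wedge \nd x) \vee \neg x$ and then distribute over $\simi y$, yielding
$$\simi x \wedge \simi y = (x \wedge \nd x \wedge \simi y) \vee (\neg x \wedge \simi y).$$
By D2, the target $\simi(x \vee y)$ factorises as $\bigl((x \vee y) \vee \neg(x \vee y)\bigr) \wedge \nd(x \vee y)$, so it suffices to bound each of the two summands above by both $(x \vee y) \vee \neg(x \vee y)$ and $\nd(x \vee y)$. For the summand $\neg x \wedge \simi y$, the first of these bounds is exactly B40 and the second is exactly B42. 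For the summand $x \wedge \nd x \wedge \simi y$, lemma B41 supplies the bound by $\nd(x \vee y)$, while the trivial chain $x \wedge \nd x \wedge \simi y \leq x \leq x \vee y \leq (x \vee y) \vee \neg(x \vee y)$ supplies the other. Combining these bounds with distributivity gives $\simi x \wedge \simi y \leq \simi(x \vee y)$.

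The main obstacle is purely organisational: the lemmas B40, B41 and B42 have already done the hard work and were evidently crafted for exactly this step. Once one sees that the two equivalent defining formulas D2 and D3 for $\simi$ must be used on opposite sides of the inequality --- D3 to split $\simi x$ into the two pieces above, and D2 to reassemble $\simi(x \vee y)$ from its two component upper bounds --- the argument reduces to routine lattice manipulation. No fresh identity from the axiom list B1--B10 is needed at this stage.
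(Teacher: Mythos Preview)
Your proposal is correct and follows essentially the same route as the paper: one inclusion via the antitonicity B38, and the reverse by splitting $\simi x\wedge\simi y$ through D3 into $(x\wedge\nd x\wedge\simi y)\vee(\neg x\wedge\simi y)$ and bounding each summand by both factors of $\simi(x\vee y)$ in its D2 form, invoking B40, B41, B42 and the trivial bound $x\wedge\nd x\wedge\simi y\leq x\vee y$. Your write-up is in fact cleaner than the paper's, which contains a couple of mislabelled cross-references at this point.
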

\begin{proof}

We have

$ \simi  (x \vee y) = \simi  x \wedge \simi  y$, \hfill [B38]

On the other hand

\begin{enumerate}
\item[\rm (1)]  $\simi  x\wedge \simi  y=\left( x\wedge \nd x\wedge \simi  y\right) \vee \left( \neg x\wedge \simi  y\right) $, \hfill[D2]

\item[\rm (2)]  $ x\wedge \nd x\wedge \simi  y \leq \left(x\vee y\right) \vee \neg \left(  x \vee  y\right) $,

\item[\rm (3)]  $ x\wedge \nd x\wedge \simi  y \leq \simi  \left( x\vee y\right), $\hfill[(3),(B41),(D2)]

\item[\rm (4)]  $\neg x\wedge \simi  y\leq \simi  \left( x\vee y\right), $\hfill[(B40),(B42),(D2)]

\item[\rm (5)]  $\simi  x\wedge \simi  y\leq \simi  \left( x\vee y\right) $. \hfill[(1),(2),(4)]
\end{enumerate}

Finally, taking into account that $\left( A,\wedge ,\vee ,0,1\right)$ is a bounded distributive \linebreak lattice with least element 0, greatest element 1, the sufficient condition of theorem 2.1. follows from corollaries 2.3, 2.7, 2.4 and 2.5.  
\end{proof}

\section{Other characterizations}

The following characterization of 4--valued modal algebras is easier than that given in theorem 2.1. 
\begin{teo}
Let $\left( A,\wedge ,\vee ,\simi ,\neg ,1\right) $ be an algebra of type {\rm(2,2,1,1,0)} where $\left( A,\wedge ,\vee ,\sim,1\right) $ is a De
Morgan algebra with last element {\rm 1} and first element $0=\simi  1$. If  $\triangledown$ is an unary
operation defined on $A$ by means of the formula $\triangledown x=\simi  \neg x$. Then $A$ is a {\rm 4}--valued modal algebra if and only if it verifies:

\begin{enumerate}
\item[\rm (T1)]  $x\wedge \neg x=0$.

\item[\rm (T2)]  $x\vee \neg x=x\vee \simi  x$.
\end{enumerate}

Furthermore $\neg x = \simi  \triangledown x$.
\end{teo}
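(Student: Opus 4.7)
The strategy is to verify the two directions separately, exploiting the fact that since $(A,\wedge,\vee,\sim,1)$ is already a De Morgan algebra, axioms A2--A5 are free and only A6 and A7 need attention.

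For sufficiency, with $\triangledown x := \simi \neg x$, axiom A6 collapses to an application of De Morgan and T1:
$\simi x\vee \triangledown x=\simi x\vee \simi \neg x=\simi (x\wedge \neg x)=\simi 0=1.$
For A7 I would first rewrite $\simi x\wedge \triangledown x=\simi (x\vee \neg x)$ by De Morgan, then apply T2 to turn this into $\simi (x\vee \simi x)=\simi x\wedge x$ using A4 and A5. The supplementary equality $\neg x=\simi \triangledown x$ is then just A4 applied to $\simi \simi \neg x$.

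For necessity, assume $A$ is a 4--valued modal algebra and set $\neg x:=\simi \triangledown x$. T1 is immediate from A6 by De Morgan: $x\wedge \neg x=\simi (\simi x\vee \triangledown x)=\simi 1=0$. For T2, the inequality $x\vee \neg x\leq x\vee \simi x$ reduces to $\neg x\leq \simi x$, and I would deduce this from $x\leq \triangledown x$, which in turn follows by computing $(x\vee \triangledown x)\wedge (\simi x\vee \triangledown x)$ two ways using A6 and A7. The converse inequality $x\vee \simi x\leq x\vee \neg x$ is the substantive step: I would first establish $\triangledown \triangledown x=\triangledown x$ by a short manipulation of A6 and A7 applied to $\triangledown x$, infer from A6 that $\simi \triangledown x\vee \triangledown x=1$, and then expand
$\simi x=\simi x\wedge (\triangledown x\vee \simi \triangledown x)=(\simi x\wedge \triangledown x)\vee (\simi x\wedge \simi \triangledown x)=(x\wedge \simi x)\vee \neg x\leq x\vee \neg x,$
using A7 and the fact $\simi \triangledown x\leq \simi x$ (which follows from $x\leq \triangledown x$).

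The main obstacle is that axiom A8 is not part of the definition of a 4--valued modal algebra, so one cannot directly use that $\triangledown$ preserves order or meets. Consequently the auxiliary fact $\triangledown \triangledown x=\triangledown x$, together with the ``boolean'' identity $\triangledown x\vee \simi \triangledown x=1$, must be wrung out of just A6 and A7; once this is in hand, the rest of the argument is routine De Morgan bookkeeping.
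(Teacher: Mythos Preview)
Your sufficiency argument is exactly the paper's: the authors write ``We check only sufficient condition'' and then derive A6 from T1 and A7 from T2 via De Morgan, just as you do. The supplementary identity $\neg x=\simi\triangledown x$ is, as you say, immediate from A4.

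For necessity (which the paper omits) your T1 is fine, but your route to T2 is far more elaborate than needed and contains an unjustified step. Observe that T2 is simply the De~Morgan dual of A7: applying $\simi$ to both sides of $x\wedge\simi x=\simi x\wedge\triangledown x$ and using A4, A5 gives
\[
x\vee\simi x=\simi(x\wedge\simi x)=\simi(\simi x\wedge\triangledown x)=x\vee\simi\triangledown x=x\vee\neg x,
\]
a one-line proof. Your detour through $\triangledown\triangledown x=\triangledown x$ is therefore unnecessary, and moreover you do not actually supply the promised ``short manipulation of A6 and A7'' establishing that identity; getting $\triangledown\triangledown x\le\triangledown x$ from A6 and A7 alone (without A8) is not as immediate as you suggest, and the cleanest proofs of it in the literature in fact pass through T2 or its equivalent. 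So while the overall claim is true, that particular step in your outline is a gap; replacing it with the direct dualization above closes it.
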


\begin{proof}

We check only sufficient condition 

\begin{enumerate}
\item[\rm (A6)]  $\simi  x\vee \triangledown x=\simi  x\vee \simi  \neg x=\simi  \left( x\wedge \neg x\right)=1 $. \hfill[T1]

\item[\rm (A7)]  $\simi  x\wedge \triangledown x=\simi  x\wedge \simi  \neg x$, \hfill[T2]

\hspace{17.5mm}$=\simi  \left( x\vee \neg x\right) $,

\hspace{17.5mm}$=\simi  \left( x\vee \simi  x\right) $,

\hspace{17.5mm}$=x\wedge \simi  x$.
\end{enumerate}
\end{proof}

\begin{rem}
In a {\rm 4}--valued modal algebra the operation considered in {\rm 2.1},
generally does not coincide with the pseudo-complement $^*$ as we can verify in the following example:

\setlength{\unitlength}{1cm}
\begin{minipage}{5cm}
\begin{picture}(2,2)
\put(3,0){\circle*{.15}}
\put(2,1){\circle*{.15}}
\put(4,1){\circle*{.15}}
\put(3,2){\circle*{.15}}
\put(3,0){\line(-1,1){1}}
\put(2,1){\line(1,1){1}}
\put(3,0){\line(1,1){1}}
\put(4,1){\line(-1,1){1}}
\put(1.6,1){$a$}
\put(2.9,-0.5){$0$}
\put(4.2,1){$b$}
\put(2.9,2.2){$1$}
\put(2.3,-1){\rm  figure 1}
\end{picture}
\end{minipage}
\hspace{2cm}
\begin{minipage}{5cm}
\begin{tabular}{c|c|c}
$x$ & $\simi  x$& $\triangledown x$ \\ \hline
$0$ & $1$ &  $0$\\
 $a$ & $a$ &  $1$\\
 $b$ & $b$ &  $1$\\
 $1$ & $0$ &  $1$ 
\end{tabular}

\begin{picture}(1,1)
\put(1,-0.2){\rm  table 1}
\end{picture}
\end{minipage}

\vspace{1cm} 

we have

\setlength{\unitlength}{1cm}
\hspace{5cm}
\begin{minipage}{5cm}
\begin{tabular}{c|c|c}
$x$ & $\neg x$& $x^*$ \\ \hline
$0$ & $1$ &  $1$\\
 $a$ & $0$ &  $b$\\
 $b$ & $0$ &  $a$\\
 $1$ & $0$ &  $0$ 
\end{tabular}

\begin{picture}(1,1)
\put(.5,0.5){\rm  table 2}
\end{picture}
\end{minipage}
\end{rem}

\noindent However all finite 4--valued modal algebra is a distributive lattice pseudo
\linebreak complemented. We do not know whether this situation holds in the \linebreak non-finite
case. This suggests that we consider a particular class of De  \linebreak Morgan algebras.

\begin{defi}
An algebra $\left( A,\wedge ,\vee ,\sim ,^*,1\right) $ of type {\rm (2,2,1,1,0)} is a modal De \linebreak Morgan $p$--algebra if the reduct $\left( A,\wedge ,\vee ,\sim ,1\right) $ is a De Morgan algebra with last element 1 and first element $0= \simi  1$, the reduct is a pseudo--complemented meet--lattice and the following condition is verified
\begin{center}

\rm H1) $x \vee \simi  x \leq x \vee x^*$

\end{center}
\end{defi}

\begin{exa}
The De Morgan algebra whose Hasse diagram is given in \linebreak {\rm figure 2} and the operations $\sim$ and $^*$ are defined in {\rm table 3}

\vspace{1cm}
\setlength{\unitlength}{1cm}
\begin{minipage}{5cm}
\begin{picture}(2,2)
\put(3,0){\circle*{.15}}
\put(3,1){\circle*{.15}}
\put(3,2){\circle*{.15}}
\put(3,3){\circle*{.15}}
\put(3,0){\line(0,1){3}}
\put(2.6,1){$a$}
\put(2.6,0){$0$}
\put(2.6,2){$b$}
\put(2.6,2.9){$1$}
\put(2.3,-1){\rm  figure 2}
\end{picture}
\end{minipage}
\hspace{2cm}
\begin{minipage}{5cm}
\begin{tabular}{c|c|c}
$x$ & $\simi  x$& $x^*$ \\ \hline
$0$ & $1$ &  $1$\\
 $a$ & $b$ &  $0$\\
 $b$ & $a$ &  $0$\\
 $1$ & $0$ &  $0$ 
\end{tabular}

\begin{picture}(1,1)
\put(1,-0.2){\rm  table 3}
\end{picture}
\end{minipage}

\vspace{.5cm}

is not a modal De Morgan $p$--algebra because $b=(a \vee \simi  a) \not\leq a \vee a^*=a$.
\end{exa}

\begin{teo}
If we define on a modal De Morgan $p$--algebra $\langle A,\wedge ,\vee ,\sim ,^*,1\rangle$ the operation $\neg$ by means of the formula $\neg x = x^* \wedge \simi  x$  then the algebra $\langle A,\wedge ,\vee ,\neg,1\rangle$ verifies the identities {\rm T1} and {\rm T2}.
\end{teo}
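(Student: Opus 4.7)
The plan is to verify T1 and T2 separately by direct computation, using the pseudo-complement identity $x \wedge x^{*}=0$, distributivity of the underlying lattice, and the defining condition H1 of a modal De Morgan $p$-algebra.

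For T1, I would simply expand the definition: $x \wedge \neg x = x \wedge (x^{*} \wedge \simi x) = (x \wedge x^{*}) \wedge \simi x$. Since $^{*}$ is the pseudo-complement on the meet-lattice reduct, $x \wedge x^{*}=0$, so the whole expression collapses to $0$. This step is essentially a one-line calculation and presents no difficulty.

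For T2, the idea is to rewrite $x \vee \neg x = x \vee (x^{*} \wedge \simi x)$ and apply distributivity to obtain $(x \vee x^{*}) \wedge (x \vee \simi x)$. The inequality $x \vee \simi x \leq x \vee \simi x$ is trivial, so $x \vee \simi x \leq (x \vee x^{*}) \wedge (x \vee \simi x)$ will follow from H1, which says precisely $x \vee \simi x \leq x \vee x^{*}$. Conversely $(x \vee x^{*}) \wedge (x \vee \simi x) \leq x \vee \simi x$ is automatic. Hence $x \vee \neg x = x \vee \simi x$.

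The main (and only) subtlety is recognizing that H1 is exactly the hypothesis needed to promote the meet $(x \vee x^{*}) \wedge (x \vee \simi x)$ back up to $x \vee \simi x$; without H1 one would only obtain the inequality $x \vee \neg x \leq x \vee \simi x$. Since H1 is built into the definition of modal De Morgan $p$-algebra, no further structural lemmas are needed, and the proof is essentially immediate.
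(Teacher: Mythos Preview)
Your proof is correct and follows essentially the same approach as the paper: expand the definition of $\neg$, use $x\wedge x^{*}=0$ for T1, and for T2 distribute $x\vee(x^{*}\wedge\simi x)$ and then invoke H1 to reduce $(x\vee x^{*})\wedge(x\vee\simi x)$ to $x\vee\simi x$. The paper simply cites H1 at the last step, while you spell out the two inequalities, but the argument is the same.
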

\begin{proof}
\begin{enumerate}
\item[\rm (T1)]  $x\wedge \neg x=x\wedge x^{*}\wedge \simi  x=0\wedge \simi  x=0$.

\item[\rm (T2)]  $x\vee \neg x=x\vee \left( x^{*}\wedge \simi  x\right) =\left( x\vee x^{*}\right) \wedge \left( x\vee \simi  x\right)$,

\hspace{13mm}$=\left( x\vee \simi  x\right) $. \hfill[H1]
\end{enumerate}
\end{proof}

\begin{rem}

By [4] we know that every finite modal {\rm 4}--valued algebra $A$ is direct product of copies of {\rm T2}, {\rm T3} and {\rm T4}, where {\rm T2=\{0,1\}} and 
{\rm T3=\{0,a,1\}} are modal De Morgan $p$--algebra we conclude that $A$ is also a modal De Morgan $p$--algebra.

We do not know whether this situation holds in the non-finite case.
\end{rem}

\noindent Instituto de Ciencias B\'asicas \\ Universidad Nacional de San Juan \\
Avda.Ignacio de la Roza 230 - Oeste \\ 5400 - San Juan Argentina

\end{document}